\newcommand\br[1]{\left(#1\right)}
\newcommand\abs[1]{\left|#1\right|}
\newcommand\set[1]{\left\{#1\right\}}
\newcommand{\NN}{\mathbb{N}}
\newcommand{\ZZ}{\mathbb{Z}}
\newcommand{\QQ}{\mathbb{Q}}
\newtheorem{thm}{Theorem}[section]
\newtheorem{conjecture}[thm]{Conjecture}
\newtheorem{lem}[thm]{Lemma}
\newtheorem{remark}[thm]{Remark}
\begin{document}

\title[$x^2-a\, y^2 = 1$ and $z^2 - b\, x^2 = 1$ have at most one solution.]{On the unique solvability of the simultaneous Pell equations $x^2-a\, y^2 = 1$ and $z^2 - b\, x^2 = 1$.}

\author[T. Hilgart]{Tobias Hilgart}
\email{tobias.hilgart@plus.ac.at}

\author[V. Ziegler]{Volker Ziegler}
\email{volker.ziegler@plus.ac.at}

\subjclass[2020]{11D09, 11Y50}

\keywords{Simultaneous Pell equations, Diophantine equations}

\begin{abstract}    
    We consider the simultaneous Pell equations
    \[
        x^2 - a\, y^2 = 1, \qquad z^2 - b\, x^2 = 1,
    \]
    where $a > b\geq 2$ are positive integers. We describe a procedure which, for any fixed $b$, either confirms that the simultaneous Pell equations have at most one solution in positive integers, or finds all exceptions for which we have proved that there are at most finitely many.
\end{abstract}

\maketitle

%
%
\section{Introduction}
\label{sec:intro}

    A natural question to ask when considering simultaneous Pell equations
    \[
        a\, x^2 - b\, z^2  = \delta_1, \qquad c\, y^2 - d\, x^2 = \delta_2
    \]
    is, how many solutions (in positive integers) there are. Ineffective results from Thue~\cite{thue1909} and Siegel~\cite{siegel1929} imply that for $\gcd(ab, \delta_1) = \gcd(cd, \delta_2) = 1$ and $b\delta_1 \neq c \delta_1$ there are only finitely many. 

    If neither $ab$ nor $bd$ is a perfect square, and $\gcd(abd, \delta) = 1$, then Yuan \cite{yuan2004} proved that if each equation is solvable, the simultaneous equations 
    \[
        a\, x^2 - b\, z^2 = \delta, \qquad c\, y^2 - d\, z^2 = \delta
    \]
    have a solution (even an infinite number of them) if and only if $ad$ is a perfect square. 

    Masser and Rickert \cite{massrick1996} proved that, given the flexibility to freely choose both $\delta_1$ and $\delta_2$, simultaneous Pell equations can be constructed with arbitrarily many solutions.

    That is, it makes sense to narrow the scope when considering simultaneous Pell equations; in this paper, we consider
    \begin{equation}\label{eq:simultPell}
        x^2 - a\, y^2 = 1, \qquad z^2 - b\, x^2 = 1.
    \end{equation}

    Bennett \cite{bennett1998} noted that an analogous version of his proof shows that the above equations have at most three solutions. Cipu and Mignotte \cite{cipumign2007} improved this bound to two solutions, and reformulated a conjecture of Yuan~\cite{yuan2004} on the number of solutions, 
    \begin{conjecture}
        If the system of simultaneous Pell equations
        \[
            a\, x^2 - b\, y^2 = 1, \qquad c\, y^2 - d\, z^2 = 1
        \]
        has at least two solutions in positive integers, then their coefficients are given by
        \[
            (a, b, c, d) = (1, m^2-1, 1, n(l, m)^2 - 1), \qquad (a, a-1, b(l,a), b(l,a) -1)
        \]
        or equivalent forms thereof, where
        \[
            n(l, m) = \cfrac{ \br{ m + \sqrt{m^2 - 1} }^{2l} - \br{ m - \sqrt{m^2-1} }^{2l} }{4 \sqrt{m^2 - 1}}
        \]
        and
        \[
            4b(l, a) - 1 = \cfrac{ \br{ \sqrt{a} + \sqrt{a-1} }^l - \br{ \sqrt{a} - \sqrt{a-1} }^l }{2 \sqrt{a-1}}, \quad l \equiv 3 \mod 4.
        \]
    \end{conjecture}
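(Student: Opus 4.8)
The plan is to convert the solvability question into one about coincidences between two binary recurrence sequences and then attack the resulting exponential Diophantine equation. For a non-square $N$ write $\varepsilon_{N} = u_{N} + v_{N}\sqrt{N}$ for the fundamental solution of $X^{2} - N Y^{2} = 1$; every solution of a single Pell equation is a power of the corresponding fundamental unit, so the solutions of $a x^{2} - b y^{2} = 1$ with a prescribed value of $y$ are indexed by an integer $m$, and the solutions of $c y^{2} - d z^{2} = 1$ with the same $y$ by an integer $n$. A solution of the simultaneous system is then precisely a coincidence $V_{m} = W_{n}$ between two Lucas-type sequences attached to the two equations, and a \emph{second} solution forces a \emph{second} such coincidence. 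The conjecture thus reduces to classifying all pairs of these sequences that meet in at least two places and reading off the resulting constraint on $(a,b,c,d)$.

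First I would bound the number of solutions by an absolute constant: this is the hypergeometric, or Pad\'e-approximation, method of Rickert and Masser--Rickert \cite{massrick1996}, applied to simultaneous rational approximations with a common denominator to a pair of square roots $\sqrt{N_{1}}, \sqrt{N_{2}}$ built from the coefficients, and it underlies both Bennett's bound \cite{bennett1998} and the Cipu--Mignotte refinement \cite{cipumign2007}; in the normalised system at hand it caps the number of solutions at two. Next I would install a gap principle: ordering two solutions by size, the recurrence structure forces the larger $y$ to be roughly the square of the smaller, which couples the index pairs $(m_{1}, n_{1})$ and $(m_{2}, n_{2})$ tightly. Feeding the near-equality $\varepsilon_{b}^{\,m} \approx \varepsilon_{d}^{\,n}$ (in the appropriate normalisation) into Baker's theory of linear forms in logarithms, via an explicit Matveev-type estimate, then yields an effective upper bound for $m_{1}$ and $n_{1}$; combined with the gap principle this bounds the remaining parameters --- but only \emph{up to a dependence on $b$}, equivalently on the quantity $m$ in the conjectural parametrisation, and this is exactly the place where the present paper elects to fix $b$.

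With the parameters confined to a bounded range, the configurations that survive are those two Lucas sequences which already agree in their first term and then re-coincide; a direct manipulation of the two defining recurrences shows this pins $(a,b,c,d)$ down to one of the two displayed families, with the side condition $l \equiv 3 \bmod 4$ emerging from a parity or sign constraint concealed in the second recurrence. The residual finite search is then closed by a computer-assisted reduction of the linear form in logarithms --- a continued-fraction step of Baker--Davenport type, or its LLL variant --- which collapses the Matveev range to something enumerable. \emph{The main obstacle} is precisely this last step when a coefficient is left free: the relevant linear form then involves logarithms of unbounded height, so no single reduction terminates it, and one is driven either to fix $b$, as is done here --- which is why the outcome is a procedure rather than an unconditional theorem --- or to exploit an extra algebraic relation hidden in the special structure of the families $n(l,m)$ and $b(l,a)$ that a generic Baker-style argument does not see.
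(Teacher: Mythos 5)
The statement you have been asked about is a \emph{conjecture} (Yuan's, in the reformulation of Cipu and Mignotte); the paper does not prove it, and neither does your proposal. Your sketch is an accurate survey of the standard machinery --- the Masser--Rickert/Bennett hypergeometric bound on the number of solutions, a gap principle between two solutions, Matveev's lower bound for the resulting linear form in logarithms, and a Baker--Davenport continued-fraction reduction --- and this is indeed the toolkit the paper deploys for its actual results, namely Theorem~\ref{thm:main} (at most one solution of the special system $x^2-ay^2=1$, $z^2-bx^2=1$ once $a\ge a_0(b)$) and Theorem~\ref{thm:brun} (verification for $b\le 10000$ via Algorithm~\ref{algorithm}). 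So as a description of the state of the art your proposal is sound; as a proof of the conjecture it is not.

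The genuine gap is the sentence ``a direct manipulation of the two defining recurrences shows this pins $(a,b,c,d)$ down to one of the two displayed families.'' No such manipulation is known; that step \emph{is} the conjecture. The premise ``with the parameters confined to a bounded range'' is not available to you: the exponent bounds coming from Baker's method depend on the coefficients $a,b,c,d$, which are unbounded, and the exceptional families $n(l,m)$ and $b(l,a)$ produce genuine second solutions for infinitely many coefficient tuples, so no uniform exponent bound can isolate them. Your closing paragraph concedes exactly this (``no single reduction terminates it''), which makes the proposal a correct diagnosis of why the problem is open rather than a proof. The paper's contribution is precisely to accept this limitation, fix $b$, derive the linear forms \eqref{eq:linearform} and \eqref{eq:linearform_noDelta} explicitly, and convert the reduction into an executable procedure; it claims nothing about the conjecture in its general four-coefficient form.
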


    The conjecture states for the system \eqref{eq:simultPell} that there is at most one solution in positive integers, except maybe some special cases where $a$ and $b$ are almost squares.

    For several classes of coefficients, the above conjecture was confirmed. Yuan himself \cite{yuan2004specific} proved it for $b$ of the form $4m(m+1)$, and Cipu \cite{cipu2007} for $4m^2-1$. Zhang et al. \cite{zhang2015} completely solved the case where $a$ is prime and $b = 24$, not only confirming the conjecture in this case, but finding that there are only the solutions $(99,70,485)$ and $(10,3,49)$ when $a = 2$ and $a = 11$, respectively.

    In the spirit of \cite{zhang2015}, we consider $b$ to be fixed -- but now arbitrary. Without any further restrictions, we prove that the conjecture (that there is at most one solution) is eventually true, in the sense that when $a$ is sufficiently large with respect to $b$. We formulate this in the following 
    \begin{thm}\label{thm:main}
        Let $a > b \geq 2$ be two positive integers, where $b$ is fixed. Then there exists an effectively computable constant $a_0$, dependent only on $b$, such that for $a \geq a_0$ the simultaneous Pell equations
        \begin{equation}\label{eq:pell}
            x^2 - a\, y^2 = 1, \qquad z^2 - b\, x^2 = 1
        \end{equation}
        have at most one solution $(x,y,z)$ in positive integers.
    \end{thm}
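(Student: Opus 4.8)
The plan is to argue by contradiction: assuming \eqref{eq:pell} has two distinct solutions, I will show that $a$ is then bounded by an effectively computable function of $b$, so that taking $a_0$ above that bound proves the theorem. Call the two solutions $(x_1,y_1,z_1)$ and $(x_2,y_2,z_2)$; since $x_1=x_2$ would force $y_1=y_2$ and $z_1=z_2$, one may order them with $x_1<x_2$, whence also $y_1<y_2$. Let $\epsilon_b$ be the fundamental solution of $Z^2-bX^2=1$ and $\epsilon_a=\chi_1+\psi_1\sqrt a$ that of $X^2-aY^2=1$; Pell's theory gives $z_i+x_i\sqrt b=\epsilon_b^{k_i}$ and $x_i+y_i\sqrt a=\epsilon_a^{l_i}$ with integers $1\le k_1<k_2$ and $1\le l_1<l_2$, and I write $\epsilon_a^{l}=\chi_{l}+\psi_{l}\sqrt a$ and $\epsilon_b^{k}=\zeta_k+v_k\sqrt b$, so that $x_i=\chi_{l_i}=v_{k_i}$ and $y_i=\psi_{l_i}$. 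The case $ab=\square$ is disposed of at once: writing $d=\sqrt{ab}\in\NN$ and eliminating $x_i$ gives $z_i^2-d^2y_i^2=b+1$, hence $(z_i-dy_i)(z_i+dy_i)=b+1$ with both factors positive integers, so $z_i-dy_i\ge1$, $z_i+dy_i\le b+1$, and thus $2dy_i\le b$; since $d>b$ this forces $y_i<\tfrac12$, contradicting $y_i\ge1$. So the system has no solution when $ab$ is a perfect square, and we may henceforth assume $ab$ is not a square --- and, whenever convenient, that $a$ (equivalently $x_1$ or $x_2$) exceeds any given explicit $b$-bound, since otherwise the desired conclusion is already in hand.

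The heart of the matter is the theory of linear forms in logarithms. From $y_i\sqrt a=\sqrt{x_i^2-1}$ and $z_i=\sqrt{bx_i^2+1}$ one computes $\epsilon_b^{k_i}/(\epsilon_a^{l_i}\sqrt b)=1+\tfrac{b+1}{4bx_i^2}+O(x_i^{-4})$, so the forms $\Lambda_i:=k_i\ln\epsilon_b-l_i\ln\epsilon_a-\tfrac12\ln b$ are positive with $\Lambda_i<c_1(b)\,x_i^{-2}$. Feeding $\Lambda_2$ --- a form in the three logarithms $\ln\epsilon_b,\ln\epsilon_a,\ln b$, with $h(\epsilon_a)=\tfrac1{2l_2}\ln(x_2+\sqrt{x_2^2-1})$ and coefficient height at most $k_2$ --- into a Baker-type lower bound and comparing with the upper bound $\Lambda_2<c_1(b)x_2^{-2}$, the factor $\ln x_2$ cancels and one is left with $l_2\le c_2(b)\ln k_2$; the same applied to $\Lambda_1$ gives $l_1\le c_2(b)\ln k_2$. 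The crucial next move is to eliminate the ``moving'' logarithm $\ln\epsilon_a$, whose height grows with $x_i$: the combination $\Lambda':=l_2\Lambda_1-l_1\Lambda_2=(l_2k_1-l_1k_2)\ln\epsilon_b-\tfrac{l_2-l_1}{2}\ln b$ is a linear form in the two \emph{fixed} logarithms $\ln\epsilon_b$ and $\ln b$; it is nonzero because $\epsilon_b$ is a quadratic irrational while $b^{l_2-l_1}\in\NN$, and it satisfies $\abs{\Lambda'}<2l_2c_1(b)\,x_1^{-2}$. Reading off $l_2k_1-l_1k_2$ from the near-equalities $\Lambda_i\approx0$ and using $b<\epsilon_b^2$ bounds $\abs{l_2k_1-l_1k_2}$ by essentially $l_2$, so the coefficient height of $\Lambda'$ is $O_b(\ln k_2)$. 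Since both algebraic numbers in $\Lambda'$ depend only on $b$, its Baker-type lower bound involves only the logarithm of that height, and combining it with the upper bound for $\abs{\Lambda'}$ yields $\ln x_1\le c_3(b)\br{\ln\ln k_2+c_4(b)}$.

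It remains to close the loop. From $x_2=\chi_{l_2}<\epsilon_a^{l_2}=\br{x_1+\sqrt{x_1^2-1}}^{l_2/l_1}<(2x_1)^{l_2/l_1}$ together with $l_1\ge1$ one gets $\ln x_2<l_2\ln(2x_1)\le c_2(b)\ln k_2\,(\ln2+\ln x_1)$, while $x_2=v_{k_2}>\epsilon_b^{k_2}/(3\sqrt b)$ gives $k_2=O_b(\ln x_2)$, hence $\ln k_2\le\ln\ln x_2+c_5(b)$. Feeding in the bound for $\ln x_1$ and substituting these three inequalities into one another produces $\ln x_2=O_b\br{\ln\ln x_2\cdot\ln\ln\ln x_2}$, which is impossible once $x_2$ exceeds an effectively computable bound depending only on $b$. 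Hence $x_2$, and with it $a\le x_1^2-1<x_2^2$, is bounded in terms of $b$; taking $a_0$ larger than all the $b$-bounds so produced proves the theorem. I expect the main obstacle to be precisely this coupling of the two linear-form estimates: one must check that the preliminary bounds $l_1,l_2\le c_2(b)\ln k_2$ really do control the coefficient height of the eliminated form $\Lambda'$ tightly enough --- by $O_b(\ln k_2)$ rather than merely $O_b(k_2)$ --- since it is exactly this double-logarithmic dependence on $k_2$ that the crude Pell-growth bound $x_2\le(2x_1)^{l_2/l_1}$ can absorb when the loop is closed; the remaining work is the bookkeeping of constants needed to make $a_0$ genuinely effective.
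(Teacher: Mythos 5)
Your proposal is correct and follows essentially the same route as the paper: a three-logarithm lower bound applied to the forms $k_i\log\epsilon_b - l_i\log\epsilon_a - \tfrac12\log b$ to control the exponents, then elimination of the $a$-dependent logarithm $\log\epsilon_a$ between the two forms to obtain a two-logarithm form in quantities depending only on $b$, a two-logarithm lower bound for that form, and finally a bound on $a$. The only real difference is bookkeeping: the paper normalizes Matveev's parameter $B$ by the height of $\delta_a=\epsilon_a$ so that the first step already yields an absolute bound $m<c_m(b)$ on the $\epsilon_a$-exponent (rather than your $l_2\le c_2(b)\ln k_2$), which makes the final loop-closing a one-line deduction instead of your triple substitution.
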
    

    If we follow the proof of the above theorem and depart from it at the right time, we can use the theory of continued fractions and a well-known reduction method (or variant thereof) based on it to obtain an executable procedure that can confirm the above conjecture for any (reasonably large) $b$. The algorithm is described and analysed in Section~\ref{sec:procedure}. Our implementation in Sagemath~\cite{sagemath} has yielded the following results.
    \begin{thm}\label{thm:brun}
        For $1 \leq b \leq 10000$, there are no two solutions to the simultaneous Pell equations
        \[
            x^2 - a\, y^2 = 1, \qquad z^2 - b\, x^2 = 1,
        \]
        for any $a \geq b$.        
    \end{thm}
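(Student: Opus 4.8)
The plan is to turn the proof of Theorem~\ref{thm:main} into an executable algorithm, stopping short of the astronomically large threshold $a_0(b)$ and instead using continued fractions to bring the relevant bounds down to a checkable size. Suppose $(x_1,y_1,z_1)$ and $(x_2,y_2,z_2)$ both solve \eqref{eq:pell} for a fixed pair $(a,b)$, with $x_1<x_2$. As in the proof of Theorem~\ref{thm:main}, each $x_i$ is simultaneously a term $x_{n_i}$ of the Pell sequence attached to $Z^2-bX^2=1$ — with fundamental unit $\varepsilon_b=z_0+x_0\sqrt b$ — and a term $X_{m_i}$ of the Pell sequence attached to $X^2-aY^2=1$; that proof further shows $x_1$ is the fundamental solution on the $a$-side, so $m_1=1$, $a=(x_1^2-1)/y_1^2$, and $\varepsilon_a:=x_1+y_1\sqrt a=x_1+\sqrt{x_1^2-1}$ depends only on $x_1$. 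One then extracts a linear form of the shape
\[
    \Lambda \;=\; n_2\log\varepsilon_b \;-\; m_2\log\varepsilon_a \;+\; \log\gamma ,
\]
with $\gamma$ an explicit algebraic number, which is nonzero but exponentially small in $\min(m_2,n_2)$, whereas Matveev's theorem gives $\abs{\Lambda}>\exp\br{-C(b)\log\max(m_2,n_2)}$ \emph{provided $x_1$ — equivalently the height of $\varepsilon_a$ — has been bounded}; equating the two yields an effective ceiling $n_2\le N_0(b)$, and likewise for $m_2$ and $n_1$.

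To descend from $N_0(b)$ one exploits that $b$ is fixed: $\varepsilon_b$, $\log\varepsilon_b$, $\log b$, and the continued fraction expansion of $\sqrt b$ (or of $\log\varepsilon_b/\log\varepsilon_a$ once the $a$-side is pinned down) are all explicitly computable. Since $\varepsilon_a=x_1+\sqrt{x_1^2-1}$ depends only on $x_1=x_{n_1}$, hence only on $b$ and $n_1$, and since the proof of Theorem~\ref{thm:main} bounds $n_1$ effectively in terms of $b$, the $a$-dependent data range over an explicit finite family, and $0<\abs{\Lambda}<C_1(b)\,\varepsilon_b^{-c n_2}$ becomes a genuine Diophantine inequality in the integers $(n_1,n_2,m_2)$. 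A standard continued-fraction reduction then applies — Legendre's theorem on convergents in the two-term case, or the multidimensional variant of Brun type described in Section~\ref{sec:procedure} in the three-term case — and shows that every solution with $n_2\le N_0(b)$ in fact has $n_2$ below a small explicit constant; should a single pass leave a residual interval, which happens exactly when $\sqrt b$ has a large partial quotient, one iterates the reduction on the truncated range in the usual way.

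It remains to dispose of the finitely many surviving triples $(n_1,n_2,m_2)$ for each $b\le 10000$. Each such triple confines $x_1=x_{n_1}$ to an explicit small range, hence — via $a y_1^2=x_1^2-1$ — a short list of admissible $a$, and one checks directly that the resulting $x_2=X_{m_2}$ is not also a term $x_{n_2}$ of the $\varepsilon_b$-sequence, i.e.\ that no second solution actually occurs. Carrying this out for all $1\le b\le 10000$ on a computer, by the procedure of Section~\ref{sec:procedure} and its Sagemath~\cite{sagemath} implementation, proves the theorem.

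The hard part is not the reduction itself but arranging, at the point where one departs from the proof of Theorem~\ref{thm:main}, that every quantity depending on the unbounded parameter $a$ enters $\Lambda$ only through data of bounded height — here, through $x_1=x_{n_1}$ with $n_1$ effectively bounded — for otherwise $N_0(b)$ would itself grow with $\log a$ and the reduction would have nothing fixed to run against. A secondary difficulty is uniformity over the ten thousand values of $b$: some have large fundamental units or long runs of large partial quotients in $\sqrt b$, so the constants $C(b)$ and $N_0(b)$ must be tracked with care and several rounds of reduction may be needed before the residual case list is short enough to enumerate.
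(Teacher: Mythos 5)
Your proposal follows the same overall strategy as the paper: derive linear forms in logarithms from the two hypothetical solutions, bound the exponents via Matveev, shrink the bounds with continued fractions, and finish by enumerating and checking a finite list by computer. Two points, however, are genuine gaps rather than matters of presentation.

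First, you assert that the smaller solution is the fundamental solution on the $a$-side, i.e.\ $m_1=1$, so that $\varepsilon_a=x_1+\sqrt{x_1^2-1}$. Neither Theorem~\ref{thm:main} nor its proof gives this, and it is false in general: all one knows is $x_1+y_1\sqrt a=\delta_a^{m_1}$ for some $m_1\ge 1$. The paper therefore works with $\gamma_n=x_n+\sqrt{x_n^2-1}=\delta_a^{l}$, concludes $l=1$ only when $x_n^2-1$ is squarefree, and otherwise bounds $l$ by $c_l$ using the squarefree part $\Pi$ of $x_n^2-1$; the linear form \eqref{eq:linearform_gamma} is then multiplied through by $l$ to keep integer coefficients, and the Baker--Davenport reduction is run separately for every $l\le c_l$. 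Without this, your reduction is applied to a form whose coefficient $m_2/m_1$ need not be an integer, and the case $m_1>1$ is simply unaccounted for.

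Second, your route to a finite, enumerable family of $a$-side data is underspecified. You invoke ``the proof of Theorem~\ref{thm:main}'' to bound $n_1$, but there that bound comes from Laurent's theorem and is of the order of $10^5$--$10^6$ (e.g.\ $720888$ for $b=24$); the candidates $x_{n_1}\approx\epsilon^{n_1}$ would then have hundreds of thousands of digits and could be neither enumerated nor factored. The crux of the paper's procedure is that after eliminating $\log\delta_a$ one obtains the two-logarithm inequality \eqref{eq:linearform_noDelta}, whose data ($\mu=\log\sqrt b/\log\epsilon$ and the coefficient bound $c_m$) depend only on $b$, so Lemma~\ref{lem:continuedfraction}, applied to the first convergent with denominator exceeding $c_m$, gives $c_{n_1}$ of order $\log c_m/\log\epsilon$ --- small enough to enumerate --- \emph{before} any $a$-dependent quantity is needed. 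Your write-up instead has the reduction acting first on the three-term form in $(n_2,m_2)$, which presupposes the very boundedness of $x_1$ that the two-term reduction exists to supply (also, the relevant continued fraction is that of $\log\sqrt b/\log\epsilon$, not of $\sqrt b$). With these two corrections your plan coincides with Algorithm~\ref{algorithm}.
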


%
%
\section{Auxiliary Results}
\label{sec:lemmas}

    In our proof of Theorem~\ref{thm:main}, we use the fundamental solutions of both Pell equations to derive linear forms in logarithms and use lower bounds on them to obtain bounds on the coefficients of the Pell equations. We use the following results due to Matveev~\cite{matveev2000} and Laurent~\cite{laurent2008}, respectively, depending on whether we have linear forms in three or two logarithms.
    
    \begin{thm}[\cite{matveev2000}*{Corollary~2.3; real case and $n=3$}]\label{thm:matveev}
        Let $\alpha_1$, $\alpha_2$ and $\alpha_3$ be three distinct positive real algebraic numbers whose logarithms are $\ZZ$-linearly independent, let $b_1$, $b_2$ and $b_3$ be integers with $b_3 \neq 0$, and put
        \[
            \Lambda = b_1 \log\alpha_1 + b_2 \log\alpha_2 + b_3 \log\alpha_3.
        \]
        
        Let
        \[
            D = [\QQ(\alpha_1, \alpha_2, \alpha_3) : \QQ],
        \]
        and let $A_1$, $A_2$ and $A_3$ be positive real numbers such that
        \[
            A_j \geq \max\set{ D\, h(\alpha_j), \abs{\log\alpha_j} } \quad (1 \leq j \leq 3),
        \]
        where $h$ is the absolute Weil height.

        Choose any $B$ satisfying
        \[
            B \geq \max\set{ \abs{b_j} \frac{A_j}{A_1}\, : \, 1 \leq j \leq 3 },
        \]
        and define
        \[
            c_{\text{Mat}} = \frac{45 \cdot 16^5}{4}\, e^4\, \br{ 26.25 + \log\br{ D^2 \log(eD) } } ,
        \]
        then
        \[
            \log\abs{\Lambda} > -c_{\text{mat}} D^2 A_1 A_2 A_3 \log\br{ 1.5eD \log(1.5eD) B }
        \]
    \end{thm}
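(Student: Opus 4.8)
The plan is to establish the lower bound by the Gel'fond--Baker method of auxiliary functions, in the sharpened form --- due in essence to Matveev --- that works directly over $\QQ(\alpha_1,\alpha_2,\alpha_3)$ and so avoids the Kummer-theoretic descent that would otherwise cost a factor exponential in the number of logarithms. Since $\log\alpha_1,\log\alpha_2,\log\alpha_3$ are assumed $\ZZ$-linearly independent and $b_3\neq 0$, we have $\Lambda\neq 0$, so it is meaningful to argue by contradiction: suppose
\[
    \log\abs{\Lambda} \le -c\, D^2 A_1 A_2 A_3 \log\br{1.5eD\log(1.5eD)B}
\]
for some $c$, the goal being to show $c$ cannot be taken as small as $c_{\text{Mat}}$. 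Using $b_3\log\alpha_3=\Lambda-b_1\log\alpha_1-b_2\log\alpha_2$, one rewrites everything in terms of $\alpha_1,\alpha_2$ alone, so that the smallness of $\Lambda$ enters through an exponential factor $e^{\vartheta\Lambda z}$ with $\abs{\vartheta}$ small.

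First I would introduce the one-variable entire function
\[
    F(z)=\sum_{\lambda_0=0}^{L_0}\sum_{\lambda_1=0}^{L_1}\sum_{\lambda_2=0}^{L_2} p(\lambda_0,\lambda_1,\lambda_2)\,z^{\lambda_0}\,\alpha_1^{\lambda_1 z}\,\alpha_2^{\lambda_2 z},
\]
with $L_1,L_2$ chosen roughly inversely proportional to $A_1,A_2$ (up to a common factor polynomial in $D$ and $\log B$), and with $L_0$ and a multiplicity parameter $T$ likewise fixed later. By Siegel's lemma in the Bombieri--Vaaler form, choose integers $p(\lambda_0,\lambda_1,\lambda_2)$, not all zero, of explicitly bounded height so that $F$ and its derivatives of order $<T$ vanish at $z=0,1,\dots,S$; solvability rests on $(L_0+1)(L_1+1)(L_2+1)$ exceeding $T(S+1)$ by a fixed factor, while the entries of the coefficient matrix are controlled by the arithmetic hypotheses $A_j\ge D\,h(\alpha_j)$.

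The core is the analytic extrapolation. Combining the upper estimate $\abs{\alpha_j^{\lambda_j z}}\le e^{\abs{z} L_j\abs{\log\alpha_j}}$ (here $A_j\ge\abs{\log\alpha_j}$ is used) with the Schwarz lemma on growing circles and with the assumed smallness of $\Lambda$ --- which keeps the error factor $e^{\vartheta\Lambda z}$ negligible --- one shows that $F$, together with fewer derivatives, vanishes on a much longer arithmetic progression; iterating this a bounded number of times produces a nonzero polynomial of the prescribed multidegree vanishing to high order along a long segment of the one-parameter subgroup of $\mathbb{G}_a\times\mathbb{G}_m^2$ generated by $(1,\alpha_1,\alpha_2)$. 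A multiplicity (zero) estimate on commutative algebraic groups, in the Philippon--Waldschmidt--Nesterenko form, then forces an algebraic dependence that contradicts the $\ZZ$-linear independence of the logarithms, unless the parameters satisfy the claimed inequality.

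The main obstacle is not any individual step but the explicit optimisation that produces the precise shape of the statement: one must track every constant through the choice of $L_j$, $S$, $T$, through Siegel's lemma, the Schwarz-lemma extrapolation and the zero estimate, and then balance the number of parameters against the number of extrapolation steps and the loss in the multiplicity estimate. It is exactly this balancing, specialised to $n=3$ and combined with Matveev's device of never leaving $\QQ(\alpha_1,\alpha_2,\alpha_3)$, that yields the constant $c_{\text{Mat}}=\tfrac{45\cdot16^5}{4}e^4\br{26.25+\log(D^2\log(eD))}$ and the factor $1.5eD\log(1.5eD)B$ inside the final logarithm. Accordingly I would expect to spend most of the effort on making the extrapolation lemma and the multiplicity estimate completely explicit, with the cleanest possible dependence on $D$ and $B$.
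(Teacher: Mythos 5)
The statement you are proving is not proved in the paper at all: it is quoted verbatim (up to notation) from Matveev \cite{matveev2000}*{Corollary~2.3}, and the authors use it as a black box. So the relevant question is whether your sketch would actually reconstitute Matveev's result, and as written it would not. You describe the general Gel'fond--Baker machinery --- an auxiliary function built from $z^{\lambda_0}\alpha_1^{\lambda_1 z}\alpha_2^{\lambda_2 z}$, coefficients from Siegel's lemma in Bombieri--Vaaler form, Schwarz-lemma extrapolation, and a Philippon--Waldschmidt--Nesterenko multiplicity estimate --- but the entire content of the theorem lies in the explicit quantitative output: the constant $\tfrac{45\cdot 16^5}{4}e^4\br{26.25+\log(D^2\log(eD))}$, the factor $D^2A_1A_2A_3$, and the precise argument $1.5eD\log(1.5eD)B$ of the logarithm. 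Your plan defers exactly this ("the main obstacle is \dots the explicit optimisation"), so nothing in the proposal can be checked against the stated bound; a generic balancing of the parameters you introduce would produce \emph{some} effective constant, but there is no reason it would be Matveev's, and the statement being proved is the one with these specific numbers.

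There is also a methodological mismatch: Matveev's sharpening does not come from the generic one-variable construction plus a general zero estimate that you outline. His gain over the classical Baker--W\"ustholz-type arguments rests on a multivariable construction together with his specific device for circumventing the Kummer descent (which you mention only in passing), and the numerical constants in Corollary~2.3 are the end product of that particular architecture. Reproducing them would essentially mean rewriting Matveev's paper, which is why the authors simply cite it. The honest options are either to cite \cite{matveev2000} as the paper does, or to commit to the full explicit computation --- in which case every step you currently leave schematic (choice of $L_0,L_1,L_2,S,T$, the height bounds feeding $A_j\ge D\,h(\alpha_j)$, the extrapolation count, and the constants in the multiplicity estimate) must be carried out with numerical precision before the claimed inequality can be asserted.
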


    \begin{thm}[\cite{laurent2008}*{Corollary~2} ]\label{thm:laurent}
        Let $\alpha_1, \alpha_2 \geq 1$ be two multiplicatively independent real numbers, and let $b_1$ and $b_2$ be positive integers. Put $D = [\QQ(\alpha_1, \alpha_2) : \QQ]$ and set
        \[
            b' = \frac{b_1}{D \log A_2} + \frac{b_2}{D \log A_1},
        \]
        where $A_1$ and $A_2$ are real numbers greater than $1$ such that
        \[
            \log A_i \geq \max\set{ h(\alpha_i), \log\alpha_i/D, 1/D } \quad (i=1,2).
        \]
        
        Then
        \[
            \log\abs{ b_1 \log\alpha_1 - b_2 \log\alpha_2 } \geq -17.9 D^4 \br{ \max\set{ \log b' + 0.38, 30/D, 1 } }^2 \log A_1 \log A_2.
        \]
    \end{thm}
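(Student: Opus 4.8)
The final statement is Laurent's \cite{laurent2008}*{Corollary~2}, a sharp lower bound for $\abs{\Lambda}$ with $\Lambda = b_1\log\alpha_1 - b_2\log\alpha_2$; this is a deep estimate from transcendence theory rather than an elementary fact, so the plan is to follow the \emph{interpolation determinant} method of Laurent (in the tradition of Laurent--Mignotte--Nesterenko), and a proposal can only indicate its structure. I would argue by contradiction: assume $\log\abs{\Lambda}$ lies strictly below the asserted bound, put $b' = b_1/(D\log A_2) + b_2/(D\log A_1)$ and $E = \max\set{\log b' + 0.38,\ 30/D,\ 1}$, and introduce free real parameters — $L$, the size of a determinant, together with $R$ and $S$ bounding the exponent ranges of $\alpha_1$ and $\alpha_2$ — to be fixed at the very end by an optimization. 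Following \cite{laurent2008}*{\S4}, I would then build an $L\times L$ matrix by taking $L$ analytic functions of the form $z\mapsto z^{\tau}\gamma^{z}$, with $\gamma$ assembled from $\alpha_1,\alpha_2$ weighted by $b_1,b_2$, and evaluating them at $L$ integer points indexed by pairs $(r,s)$ with $0\le r<R$, $0\le s<S$; the entries then have the shape $\binom{\ell}{\tau}\,\alpha_1^{\,p}\alpha_2^{\,q}$ with $\ell$ a linear form in $b_1,b_2$. Call the determinant $\Delta$. Heuristically, if $b_1\log\alpha_1$ equalled $b_2\log\alpha_2$ then the rows of $\Delta$ would be values of exponential polynomials of low order, so a rank count would force $\Delta = 0$ for suitable parameters; in general the deviation of $\Delta$ from $0$ is controlled by $\abs{\Lambda}$.

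Next come the two competing bounds on $\abs{\Delta}$. For the analytic one, viewing the rows of $\Delta$ as values of $L$ analytic functions, I would invoke the Schwarz-lemma / interpolation-determinant inequality of \cite{laurent2008}*{\S2} to get
\[
\log\abs{\Delta} \le -c_1 L^2\,\mathcal M + c_2 L(R+S)\br{h(\alpha_1)+h(\alpha_2)} + c_3 L^2\log\abs{\Lambda} + (\text{lower order}),
\]
where $\mathcal M$ is the main term coming from the separation of the evaluation points; the smallness of $\abs{\Lambda}$ makes $\log\abs{\Delta}$ very negative. For the arithmetic one, after multiplying $\Delta$ by suitable powers of the leading coefficients of $\alpha_1,\alpha_2$ and a common denominator it becomes an algebraic integer of $\QQ(\alpha_1,\alpha_2)$, and — \emph{provided} $\Delta\neq 0$ — Liouville's inequality (the product formula over all places, using $\log A_i\ge h(\alpha_i)$) gives
\[
\log\abs{\Delta} \ge -c_4 D\, L(R+S)\br{\log A_1+\log A_2} - c_5 L^2\log L.
\]

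Comparing these, I would choose $L,R,S$ so that the analytic estimate beats the arithmetic one unless $\log\abs{\Lambda}\ge -c_6 D^4 E^2\log A_1\log A_2$, which is the desired conclusion. The quantity $b'$ appears precisely because $b_1/(D\log A_2)$ and $b_2/(D\log A_1)$ are the weights that equalise the $\alpha_1$- and $\alpha_2$-contributions to the heights in the balancing of parameters, and the exponent $2$ in $E^2$ reflects the two-dimensional exponent box $[0,R)\times[0,S)$; carrying the optimization out carefully is what produces the explicit constant $17.9$ and the factor $D^4$.

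The hard part will be the \textbf{non-vanishing of $\Delta$} required for the arithmetic bound — it cannot simply be assumed. Instead one shows that if $\Delta$ vanished for \emph{every} admissible choice of the evaluation points, then the multiplicative independence of $\alpha_1$ and $\alpha_2$, via a zero estimate that limits how often the monomials $\alpha_1^{p}\alpha_2^{q}$ can satisfy a polynomial relation, would be contradicted. This non-vanishing argument, together with the delicate parameter optimization needed for the sharp numerical constants, is the technical core of \cite{laurent2008}, to which I would defer for the full details.
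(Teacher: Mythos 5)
The paper offers no proof of this statement at all: it is imported verbatim as an auxiliary tool, with the citation \cite{laurent2008}*{Corollary~2} standing in for the argument. Your sketch of the interpolation-determinant method (analytic Schwarz-type upper bound versus Liouville lower bound, non-vanishing via a zero estimate using multiplicative independence, and parameter optimization yielding the constant $17.9$ and the factor $D^4$) faithfully describes how Laurent actually proves it, and since you likewise defer the technical core to \cite{laurent2008}, your route is in effect the same as the paper's.
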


    Switching to the implementation in Section~\ref{sec:procedure}, where before we had a linear form in fixed but arbitrary logarithms, there we know them explicitly. We think of it as a rational approximation to an explicitly known number and simply use the following result involving continued fractions.

    \begin{lem}\label{lem:continuedfraction}
        Assume that $\mu$ is real and irrational and has the continued fraction expansion $\mu = [a_0; a_1, a_2, \ldots]$. Let $\ell$ be a positive integer, set $A=\max_{j=1, \ldots, \ell}\set{a_j}$, and let $\frac{p_\ell}{q_\ell}$ be the $\ell$-th convergent to $\mu$, then
        \[
            \frac{1}{\br{2+A} q_\ell} < \abs{p - q \mu } 
        \]
        for any rational fraction $\frac{p}{q}$ with  $q \leq q_\ell$.
    \end{lem}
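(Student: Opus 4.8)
The plan is to reduce the statement to two classical facts about the continued fraction of $\mu$ and then estimate a single complete quotient. Write $p_{-1}=1$, $q_{-1}=0$, $p_0=a_0$, $q_0=1$, and $p_j=a_jp_{j-1}+p_{j-2}$, $q_j=a_jq_{j-1}+q_{j-2}$ for $j\geq1$; let $\mu_j=[a_j;a_{j+1},\dots]$ be the $j$-th complete quotient, so that $a_j<\mu_j<a_j+1$ and $\mu=\br{\mu_{j+1}p_j+p_{j-1}}/\br{\mu_{j+1}q_j+q_{j-1}}$. Together with $p_jq_{j-1}-p_{j-1}q_j=(-1)^{j-1}$ this yields the exact identity
\[
    q_j\mu-p_j=\frac{(-1)^j}{\mu_{j+1}q_j+q_{j-1}},
\]
so in particular $q_\ell\mu-p_\ell$ and $q_{\ell-1}\mu-p_{\ell-1}$ are nonzero and of opposite sign.

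First I would establish that for all integers $p,q$ with $1\leq q\leq q_\ell$ one has either $p/q=p_\ell/q_\ell$ or $\abs{q\mu-p}\geq\abs{q_{\ell-1}\mu-p_{\ell-1}}$ — that is, the convergents are the best approximations of the second kind. One may either invoke this classical fact directly, or re-derive it: the integer matrix with rows $\br{p_\ell,p_{\ell-1}}$ and $\br{q_\ell,q_{\ell-1}}$ has determinant $(-1)^{\ell-1}$, hence is unimodular, so we may write $(p,q)=x(p_\ell,q_\ell)+y(p_{\ell-1},q_{\ell-1})$ with $x,y\in\ZZ$; then $q\mu-p=x(q_\ell\mu-p_\ell)+y(q_{\ell-1}\mu-p_{\ell-1})$ while $q=xq_\ell+yq_{\ell-1}\in[1,q_\ell]$. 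A short sign analysis finishes it: $1\leq q\leq q_\ell$ together with $q_{\ell-1}\geq1$ rules out $x,y$ both $\geq1$ and both $\leq-1$; if $y=0$ then $x=1$ and $p/q=p_\ell/q_\ell$; if $x=0$ then $\abs{q\mu-p}=\abs{y}\,\abs{q_{\ell-1}\mu-p_{\ell-1}}\geq\abs{q_{\ell-1}\mu-p_{\ell-1}}$; and if $xy<0$ the two summands $x(q_\ell\mu-p_\ell)$ and $y(q_{\ell-1}\mu-p_{\ell-1})$ have the same sign (because $q_\ell\mu-p_\ell$ and $q_{\ell-1}\mu-p_{\ell-1}$ have opposite signs), whence again $\abs{q\mu-p}\geq\abs{q_{\ell-1}\mu-p_{\ell-1}}$.

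The remaining step is a one-line estimate. Applying the identity above with $j=\ell-1$, using $\mu_\ell<a_\ell+1\leq A+1$ and $q_{\ell-2}\leq q_{\ell-1}\leq q_\ell$,
\[
    \abs{q_{\ell-1}\mu-p_{\ell-1}}=\frac{1}{\mu_\ell q_{\ell-1}+q_{\ell-2}}>\frac{1}{(a_\ell+2)q_{\ell-1}}\geq\frac{1}{(2+A)q_\ell},
\]
which handles every fraction $p/q\neq p_\ell/q_\ell$ with $q\leq q_\ell$. The same computation with $j=\ell$ gives $\abs{q_\ell\mu-p_\ell}>1/\br{(a_{\ell+1}+2)q_\ell}$, which is $\geq1/\br{(2+A)q_\ell}$ as soon as $a_{\ell+1}\leq A$; so the case $p/q=p_\ell/q_\ell$ is covered once $A$ is understood to include $a_{\ell+1}$ as well — which is harmless and is what the reduction in Section~\ref{sec:procedure} actually uses — or equivalently once one restricts to $q<q_\ell$.

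I expect the real work to lie in the first step: the best-approximation property (concretely, the sign bookkeeping in the unimodular decomposition) needs care in the degenerate cases — small $\ell$, and $a_1=1$ where $q_{\ell-1}=q_\ell$ can occur — whereas everything after it is immediate from the exact formula for $q_j\mu-p_j$ and the trivial bound $\mu_j<a_j+1$.
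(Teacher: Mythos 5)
The paper does not actually prove this lemma --- the ``proof'' is a one-line citation to page~47 of Baker's book --- so your self-contained derivation is necessarily a different route, and it is correct. The unimodular decomposition $(p,q)=x(p_\ell,q_\ell)+y(p_{\ell-1},q_{\ell-1})$ with the sign analysis is the standard proof that convergents are best approximations of the second kind, and the closing estimate $\abs{q_{\ell-1}\mu-p_{\ell-1}}=1/(\mu_\ell q_{\ell-1}+q_{\ell-2})>1/((a_\ell+2)q_{\ell-1})\geq 1/((2+A)q_\ell)$ is exactly what is needed; your worry about the degenerate cases ($\ell=1$, or $a_1=1$) is unfounded, since $q_{-1}=0$ and $q_0=1$ make the argument go through verbatim. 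More importantly, you have caught a genuine defect in the statement itself: for $(p,q)=(p_\ell,q_\ell)$ one has $\abs{q_\ell\mu-p_\ell}=1/(\mu_{\ell+1}q_\ell+q_{\ell-1})$, which can be arbitrarily small compared with $1/((2+A)q_\ell)$ when $a_{\ell+1}$ is large, so the lemma as printed (with $A$ taken only over $j\leq\ell$ and $q\leq q_\ell$ allowed) fails in that single case; one must either enlarge $A$ to include $a_{\ell+1}$ or restrict to $q<q_\ell$. As you note, the application in Section~\ref{sec:procedure} only ever uses $q=m_2-m_1<c_m<q_k$, i.e.\ the strict inequality, so nothing downstream is affected, but the statement of the lemma should be amended.
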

    \begin{proof}
        See, for instance, page~$47$ of \cite{baker1984}.
    \end{proof}

    Finally, to reduce the bounds obtained, we use a variant of a reduction method of Baker and Davenport~\cite{bakdav1969}, which can be found in \cite{odjzieg2023}.

    \begin{thm}[\cite{odjzieg2023}*{Lemma~6}]\label{thm:reduction}
        Consider a Diophantine inequality of the form
        \[
            \abs{ n \mu + \tau - x } < c_1 \exp\br{ -c_2 n },
        \]
        where $n \in \NN$, $x \in \ZZ$, $c_1$ and $c_2$ are positive constants, and $\mu$ and $\tau$ are real numbers. Suppose $n < N$ and $\kappa > 1$ such that there exists a convergent $p/q$ to $\mu$, where
        \[
            \set{q\,\mu} < \frac{1}{2\kappa N} \qquad \text{and} \qquad \set{q\,\tau} > \frac{1}{\kappa},
        \]
        where $\set{\cdot}$ denotes the distance to the nearest integer. Then we have
        \[
            n \leq \frac{ \log\br{2\kappa q c_1} }{c_2}.
        \]
    \end{thm}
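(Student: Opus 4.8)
The plan is to run the standard Baker--Davenport-type reduction behind this statement: multiply the inequality by $q$, replace $q\mu$ by the nearest integer plus a small error, and then trap the distance of the resulting real number from $\ZZ$ between the two hypotheses on $\set{q\mu}$ and $\set{q\tau}$.

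In detail, I would let $m$ be the integer nearest to $q\mu$, so that $q\mu = m + \delta$ with $\abs{\delta} = \set{q\mu} < \frac{1}{2\kappa N}$. Multiplying $\abs{n\mu + \tau - x} < c_1\exp\br{-c_2 n}$ by $q$ and substituting yields
\[
    \abs{(nm - qx) + q\tau + n\delta} < q\, c_1 \exp\br{-c_2 n}.
\]
Since $nm - qx \in \ZZ$, the left-hand side is at least $\set{q\tau + n\delta}$, the distance of $q\tau + n\delta$ from the nearest integer, so $\set{q\tau + n\delta} < q\, c_1 \exp\br{-c_2 n}$. For the matching lower bound I would invoke the elementary inequality $\set{u+v} \ge \set{u} - \abs{v}$ together with $n < N$:
\[
    \set{q\tau + n\delta} \;\geq\; \set{q\tau} - n\abs{\delta} \;>\; \frac{1}{\kappa} - \frac{n}{2\kappa N} \;>\; \frac{1}{\kappa} - \frac{1}{2\kappa} \;=\; \frac{1}{2\kappa}.
\]
Comparing the two bounds gives $\frac{1}{2\kappa} < q\, c_1 \exp\br{-c_2 n}$, hence $\exp\br{c_2 n} < 2\kappa q c_1$, and taking logarithms produces $n < \frac{1}{c_2}\log\br{2\kappa q c_1}$, which is the claimed estimate.

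I do not expect a genuine obstacle here: the only care needed is the direction of the inequalities and the observation that the integer $nm - qx$ cannot conspire to make the left-hand side small, precisely because $\set{q\tau + n\delta}$ already measures the distance to \emph{every} integer at once. The hypothesis $\kappa > 1$ is exactly what forces the perturbation $n\abs{\delta} < \frac{1}{2\kappa}$ to lie strictly below the guaranteed distance $\set{q\tau} > \frac{1}{\kappa}$, leaving room of size $\frac{1}{2\kappa}$; and the assumption that $p/q$ is a convergent to $\mu$ is not used in the argument itself — it only serves, via the continued fraction machinery (cf.\ Lemma~\ref{lem:continuedfraction}), to produce a modulus $q$ with $\set{q\mu}$ as small as $\frac{1}{2\kappa N}$ while keeping $q$, and therefore the final bound $\frac{1}{c_2}\log\br{2\kappa q c_1}$ on $n$, small.
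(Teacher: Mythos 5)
Your argument is correct and is precisely the standard Baker--Davenport reduction argument that underlies the cited result (the paper itself only quotes \cite{odjzieg2023}*{Lemma~6} without reproving it): writing $q\mu = m + \delta$, bounding the integer translate from below by $\set{q\tau + n\delta} \geq \set{q\tau} - n\abs{\delta} > \frac{1}{2\kappa}$, and comparing with $q c_1 \exp(-c_2 n)$ is exactly how this lemma is proved. Your closing observation that the convergent hypothesis is only needed to guarantee the existence of a usable $q$, not in the deduction itself, is also accurate.
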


%
%
\section{Proof of Theorem~\ref{thm:main}}
\label{sec:mainproof}

    The pair $(z_1, x_1)$ solving the equation $z^2 - b\, x^2 = 1$ and minimising $z$ is given by some convergent of $\sqrt{b}$ and is called the fundamental solution of the Pell equation. It has the well-known property that all solutions to the Pell equation can be constructed via
    \[
        (z_i + \sqrt{b} x_i) = (z_1 + \sqrt{b} x_1)^k, \qquad k=1, 2 \ldots.
    \]

    This is why the unit $z_1 + \sqrt{b}\, x_1$ is sometimes also called the fundamental solution. We call it $\epsilon$ and take the unit such that $\epsilon > 1$ by changing the sign and/or taking the inverse if necessary. Similarly, we denote the fundamental solution greater than $1$ of $x^2 - a\, y^2 = 1$ by $\delta_a$. In particular, this means that
    \begin{equation}\label{eq:fu_obviousbounds}
        \delta_a \geq 2\sqrt{a}, \qquad \epsilon \geq 2\sqrt{b}.
    \end{equation}

    Let $(x,y,z)$ be a solution to Equation~\eqref{eq:pell} in positive integers. Since
    \[
        x^2 - a\, y^2 = \br{ x + \sqrt{a}\, y}\br{x-\sqrt{a}\, y} = 1,
    \]
    there is a positive integer $m \geq 1$ such that
    \[
        x + \sqrt{a}\, y = \delta_a^m, \qquad x - \sqrt{a}\, y = \delta_a^{-m},
    \]
    and eliminating $y$ by summing the two equations gives
    \[
        x = \frac{ \delta_a^m + \delta_a^{-m} }{2}.
    \]

    We do the same thing for the second Pell equation and deduce that there exists a positive integer $n \geq 1$ such that
    \begin{equation}\label{eq:xbyepsdelta}
        x =  \frac{ \delta_a^m + \delta_a^{-m} }{2} = \frac{ \epsilon^n - \epsilon^{-n} }{ 2\sqrt{b} },
    \end{equation}
    and so we have
    \begin{equation}\label{eq:epsdelta}
        \frac{\epsilon^n}{\sqrt{b}} - \delta_a^m = \frac{\epsilon^{-n}}{\sqrt{b}} + \delta_a^{-m} > 0.
    \end{equation}

    But the right-hand side of the equation is small with respect to $m$ and $n$, so the two terms on the left-hand side must be asymptotically equal with respect to $m$ and $n$. We can specify this statement as
    \begin{equation}\label{eq:epsdelta_bounds}
        \delta_a^m < \frac{\epsilon^n}{\sqrt{b}}  < c_1 \, \delta_a^m,
    \end{equation}
    where $c_1$ is an upper bound to the expression
    \[
        1 + \frac{1}{ \delta_a^{2m} } + \frac{1}{ \delta_a^m \epsilon^n \sqrt{b} };
    \]
    since $\delta_a \geq 2\sqrt{a} > 2 \sqrt{b}$ and $m,n \geq 1$, we can take
    \[
        c_1 = 1 + \frac{1}{4b} + \frac{1}{2b\epsilon}.
    \]

    We then divide Equation~\eqref{eq:epsdelta} by $\delta_a^m$ and use the first inequality in Equation~\eqref{eq:epsdelta_bounds}, more precisely $\epsilon^{-n} < \delta_a^{-m} / \sqrt{b}$ to get
    \[
        0 < \frac{\epsilon^n}{\sqrt{b}} \delta_a^{-m} - 1 < \br{1 + \frac{1}{b}} \, \delta_a^{-2m} ,
    \]
    which, since $0 < \log\xi < \xi-1$ for $\xi > 1$, implies
    \begin{equation}\label{eq:linearform}
        0  <  -m \log\delta_a + n \log\epsilon - \log\sqrt{b}  <  \br{1 + \frac{1}{b}}  \delta_a^{-2m}.
    \end{equation}

    We then apply Theorem~\ref{thm:matveev} on this non-zero linear form, where we set
    \[
        D \leq 4, \quad A_1 = 2 \log\delta_a, \quad A_2 = 2 \log\epsilon, \quad A_3 = 4 \log\sqrt{b}.
    \]
    
    Using the second inequality in Equation~\eqref{eq:epsdelta_bounds}, we get
    \[
        n \frac{\log\epsilon}{\log\delta_a} < m + \frac{\log(c_1\sqrt{b})}{\log\delta_a},
    \]
    and with Equation~\eqref{eq:fu_obviousbounds}, we can set $B$ in Theorem~\ref{thm:matveev} as
    \[
        m + \frac{\log(c_1\sqrt{b})}{\log\delta_a} < m + \frac{\log(c_1\sqrt{b})}{\log2\sqrt{b}} < m + 0.52 = B.
    \]

    Plugging everything into Theorem~\ref{thm:matveev}, we compare its lower bound for the linear form with the upper bound from Equation~\eqref{eq:linearform}, which gives
    \[
        - c_{\text{Mat}} 16^2 \log\delta_a \log\epsilon \log\sqrt{b} \; \log\br{ 6e\log(6e)\, (m+0.52)} < -2m \log\delta_a  + \log(1+1/b). 
    \]
    
    Again using Equation~\eqref{eq:fu_obviousbounds} and the worst case $b=2$, we can bound $\frac{\log(1+1/b)}{\log\delta_a}$ by $\frac{\log(1+1/b)}{\log(2\sqrt{b})} < 0.39$. So, if we divide the above inequality by $\log\delta_a$, we get
    \begin{equation}\label{eq:matveev}
        - c_{\text{Mat}} 16^2 \log\epsilon \log\sqrt{b} \; \log\br{ 6e\log(6e)\, (m+0.52)} < -2m + 0.39,
    \end{equation} 
    and comparing both sides we find that there exists an effectively computable constant $c_m$ such that $m < c_m$.

    These arguments hold for every solution to \eqref{eq:pell}, whenever there is at least one solution. From now on, we assume that there are two solutions $(x_1, y_1, z_1)$ and $(x_2, y_2, z_2)$ with the corresponding exponents $(m_1, n_1)$ and $(m_2, n_2)$ that satisfy $n_1 < n_2$. Equation~\ref{eq:xbyepsdelta} and $\delta_a, \epsilon > 1$ imply that then $m_1 < m_2$ as well.

    Again we divide Equation~\eqref{eq:epsdelta} by $\delta_a^{m_1}$ and $\delta_a^{m_2}$, respectively, but this time we use the second inequality from Equation~\eqref{eq:epsdelta_bounds}, i.e. $\delta_a^{-m} < c_1 \sqrt{b}\, \epsilon^{-n}$, to get upper bounds in $\epsilon^{-2n_1}$ and $\epsilon^{-2n_2}$:
    \begin{equation}\label{eq:linearformbyeps}
        \begin{split}
            0  &<  -m_1 \log\delta_a  +  n_1 \log\epsilon  -  \log\sqrt{b}  <  \br{c_1^2 b + c_1} \epsilon^{-2n_1} \\
            0  &<  -m_2 \log\delta_a  +  n_2 \log\epsilon -  \log\sqrt{b}  <  \br{c_1^2 b + c_1} \epsilon^{-2n_2}
        \end{split}
    \end{equation}

    We take the first linear form $m_2$-times and subtract from it the $m_1$-th multiple of the second linear form. In this way, we arrive at the linear form
    \begin{equation}\label{eq:linearform_noDelta}
        \abs{ \br{ m_2 n_1 - m_1 n_2 } \log\epsilon - \br{ m_2 - m_1 } \log\sqrt{b} } < c_m\br{c_1^2 b + c_1} \epsilon^{-2n_1},
    \end{equation}
    from which we have eliminated $\log\delta_a$. We know that this linear form still does not vanish, since the logarithms are linearly independent and at least $m_2 - m_1$ is non-zero. With only two logarithms, we can use the often numerically better bounds of Laurent, Theorem~\ref{thm:laurent}; we set
    \[
        D=2, \quad \log A_1 = \frac{\log\epsilon}{2}, \quad \log A_2 = \log\sqrt{b}, \quad b' = \frac{\abs{m_2n_1 - m_1n_2}}{2 \log A_2} + \frac{\abs{m_2-m_1}}{2 \log A_1}.
    \]

    Using the fact that all $m_i$ and $n_i$ are positive and $m_1, m_2 < c_m$, we can bound $b'$ in terms of $n_2$. In addition, we can use Equtaion~\eqref{eq:epsdelta_bounds} twice in order to estimate $n_2$ in terms of $n_1$,
    \begin{align}\label{eq:n2_bound}
        n_2 &< m_2 \frac{\log \delta_a}{\log\epsilon} + \frac{ \log\br{c_1 \sqrt{b}} }{\log\epsilon} \nonumber\\
            &< c_m \frac{\log\delta_a}{\log\epsilon} + \frac{\log\br{c_1\sqrt{b}}}{\log\epsilon} \nonumber\\
            &< c_m \br{ n_1 + \frac{\log\sqrt{b}}{\log\epsilon} } + \frac{\log\br{c_1 \sqrt{b}}}{\log\epsilon},
    \end{align}
    and can thus effectively bound $b'$ by $n_1$. We can even make this explicit by writing
    \[
        b' < \frac{ c_m^2 ( n_1 + \log\sqrt{b} / \log\epsilon ) + c_m \log(c_1\sqrt{b})/\log\epsilon }{2 \log\sqrt{b}} + \frac{c_m}{\log\epsilon}.
    \]

    Applying Theorem~\ref{thm:laurent} to the linear form in Equation~\eqref{eq:linearform_noDelta} and comparing it to its upper bound gives
    \[
        17.9\cdot 2^3 \br{ \log b' + 0.38 }^2 \log\epsilon \log\sqrt{b} > 2n_1 \log\epsilon - \log\br{ c_m\br{c_1^2 b + c_1} },
    \]
    which in turn implies $n_1 < c_{n_1}$ for an effective constant $c_{n_1}$. It then follows from Equation~\eqref{eq:epsdelta_bounds} that
    \[
        \log\delta_a < c_{n_1} \log\epsilon - \log\sqrt{b} < \log(2\sqrt{a_0}),
    \]
    for an effective constant $a_0$. Finally, by the first inequality in Equation~\eqref{eq:fu_obviousbounds}, this implies that $a < a_0$, which completes the proof of Theorem~\ref{thm:main}.

%
%
\section{Implementation}
\label{sec:procedure}

The proof of Theorem~\ref{thm:main} is effective in the sense that it describes a procedure that we can (theoretically) follow for any explicit b to eventually obtain an explicit $a_0$. We can then check for the remaining "small" $a$ whether any of the simultaneous Pell equations have more than one solution. 

However, the problem is that we use lower bounds for linear forms in logarithms twice, to get the bounds $c_m$ and $c_{n_1}$, respectively. For example, for $b=24$ and compared with \cite{zhang2015}, we know that the fundamental solution of $z^2 - 24\,x^2 = 1$ is $5 + \sqrt{24}$, which we can use for $\epsilon$. If we calculate the two constants, we would get $c_m \approx 3.345 \times 10^{14}$ and $c_{n_1} = 720888$. This would lead to an absolutely staggering bound of $a_0 \approx 10^{ 3.3 \times 10^6 }$ for $a$.

So instead we depart from the theoretical proof after obtaining $c_m$. Furthermore, we do not calculate the fundamental solution of the Pell equation $z^2-b\, x^2 = 1$, but use the fundamental unit of the associated number field $\QQ[\sqrt{b}]$. There are lists of these fundamental units in most computer algebra systems, such as the one we used, Sagemath~\cite{sagemath}.

So from now on let $\epsilon$ be the fundamental unit of the number field $\QQ[\sqrt{b}]$. This doesn't change the derivation of Inequality~\eqref{eq:matveev}, and we can calculate the constant $c_m$ by solving it numerically. Instead of using lower bounds for linear forms in logarithms, we apply Lemma~\ref{lem:continuedfraction} to Inequality~\eqref{eq:linearform_noDelta}. We set
\[
    c_0 = \frac{c_m(c_1^2 b + c_1)}{\log\epsilon}, \qquad \mu = \frac{\log\sqrt{b}}{\log\epsilon},
\]
to rewrite Inequality~\eqref{eq:linearform_noDelta} as
\begin{equation}\label{eq:linearform_CF}
    \abs{ (m_2n_1-m_1n_2) - (m_2-m_1)\, \mu } < c_0 \epsilon^{-2n_1}.    
\end{equation}

For $\mu$ we ensure that the logarithms are computed with a high enough precision so that the convergents are correct. Since by Lemma~\ref{lem:continuedfraction} we are looking for the first convergent greater than $q=(m_2-m_1) < c_m$, it is sufficient to take a precision some order of magnitude greater than $c_m$. Applying Lemma~\ref{lem:continuedfraction} to the first convergent
\[
    \frac{p_k}{q_k} = [a_0; a_1, \ldots, a_k]
\]
of $\mu = [a_0; a_1, \ldots]$ that satisfies $q_k > c_m$ and comparing to the upper bound of Inequality~\eqref{eq:linearform_CF} gives the bound
\[
    c_{n_1} = \left\lfloor \frac{\log q_k + \log c_0 + \log(2+\max\set{a_1, \ldots, a_k})}{2\log\epsilon} \right\rfloor
\]
for $n_1$. Note that we can expect $c_{n_1}$ to be relatively small, somewhere in the order of $\log c_m / \log\epsilon$.

Assuming that there are two solutions, so that we can formulate Inequality~\eqref{eq:linearform_noDelta}, the smaller solution must be one of the candidates
\[
    x_n = \frac{ \epsilon^n - \epsilon^{-n} }{2\sqrt{b}}, \qquad 1 \leq n \leq c_{n_1}.
\]

We can iteratively compute the candidates in the number field $\QQ[\sqrt{b}]$ quite efficiently, and loop over all of them. It may happen (e.g. for $b=2$) that not every $x_n$ is an integer, and since $\epsilon$ is the fundamental unit, not the fundamental solution, $x_n$ does not necessarily have to be a solution to $z^2-b\, x^2 = 1$. Furthermore, if $x_n \leq \sqrt{1+b} < \sqrt{1+a} $ then it cannot be a solution to $x^2 - a\, y^2 = 1$. We can skip these candidates.

For the rest, we look at the quantity $\gamma_n = x_n + \sqrt{x_n^2 - 1}$. If $x_n$ is also a solution to $x^2 - a\, y^2 = 1$, then $\gamma_n = x_n + \sqrt{a}\, y$. By computing the prime factorisation
\[
    x_n^2 - 1 = \prod_{i=1}^s p_i^{\alpha_i},
\]
we can find all $a$ and $y$ such that $x_n^2 - a\, y^2 = 1$. Furthermore, $a$ is bounded from below by the square-free part of $x_n^2-1$,
\[
    \Pi = \prod_{ \substack{i=1 \\ \alpha_i \text{ odd} } }^s p_i.
\]

If $x_n^2-1$ is square-free, then $a = x_n^2-1$ is unique and $y$ must be $1$. Since $y=1$ is minimal we can even conclude that $\gamma_n = \delta_a$ must be the fundamental solution in this case. Otherwise we have $\gamma_n = \delta_a^l$ for some positive integer $l$ which we can bound by
\[
    c_l = \left\lfloor \frac{\log\gamma_n}{2 \sqrt{\Pi}} \right\rfloor,
\]
since $\delta_a \geq 2\sqrt{a} \geq 2\sqrt{\Pi}$.

We can substitute $\delta_a$ for $\gamma_n^{1/l}$ in the second linear form of Equation~\eqref{eq:linearform} and get
\begin{equation}\label{eq:linearform_gamma}
    0 < -m_2 \log\gamma_n + ln_2 \log\epsilon - l\log\sqrt{b} < l(c_1^2b+c_1) \epsilon^{-2n_2},
\end{equation}
where all the logarithms are now explicit. Furthermore, $m_2$ is explicitly bounded by $m_2$, and $ln_2$ is explicitly bounded by
\[
    c_{n_2}(l) = \left\lfloor \frac{c_m \log\gamma_n + l\log(c_1\sqrt{b})}{\log\epsilon} \right\rfloor
\]
according to the line above Inequality~\eqref{eq:n2_bound}.

If we set
\[
    \mu = \frac{\log\epsilon}{\log\gamma_n}, \quad \tau(l) = -\frac{l \log\sqrt{b}}{\log\gamma_n}, \quad c_2(l) = \frac{l(c_1^2b+c_1)}{\log\gamma_n}, \quad c_3(l) = \frac{2 \log\epsilon }{l},
\]
then we can write Equation~\eqref{eq:linearform_gamma} as
\[  
    \abs{ ln_2 \, \mu + \tau(l) - m_2 } < c_3(l)\, e^{-c_2(l) ln_2}.
\]

We use Theorem~\ref{thm:reduction} to reduce the bound $c_{n_2}(l)$ for every possible $l = 1, \ldots, c_l$. To do this, we first compute the continued fraction expansion $[a_0; a_1, \ldots]$ of $\mu$ and iterate over its convergents $p_k/q_k = [a_0; a_1, \ldots a_k]$.

Setting $\kappa = 1/(2lc_{n_2}(l) \set{q_k \mu})$ ensures that the first condition in the theorem holds. And the $q_k$ are denominators of convergents of $\mu$, so the fractional part $\set{q_k \mu}$ quickly becomes smaller than $1/(2lc_{n_2}(l))$, or equivalently $\kappa > 1$. We iterate until the second condition in the theorem, $\kappa \set{q_k \tau(l)} > 1$, is also satisfied and then compute the reduced bound
\[
    \left\lfloor \frac{ \log(2\kappa q_k c_2(l) ) }{ l c_3(l) } \right\rfloor
\]
for $n_2$. We then take $c_{n_2}$ to be the largest of all these reduced bounds for $1 \leq l \leq c_l$.

At this point we have a range $n+1 \leq n' \leq c_{n_2}$ for the exponent of the second solution if the candidate $x_n$ gives the first, and we can calculate the corresponding candidates $x_{n'}$ accordingly. If we indeed have two solutions, then the rational
\[
    \frac{x_{n'}^2-1}{x_n^2-1} = \frac{a\,y'^2}{a\,y^2} = \br{ \frac{y'}{y} }^2
\]
must be a square. If it is, we have found a pair of solutions. Computing the prime factorisation of $x_{n'}^2 - 1$ as well allows us to find all $a$ and $y', y$-values for which
\[
    x_{n'}^2 - a\, y'^2 = 1 = x_n^2 - a\, y^2
\]
holds, and the $z$-values would be $\sqrt{1+ b\,x_{n'}^2}$ and $\sqrt{1 + b\, x_n^2}$, respectively. It is enough to know $x_n, x_{n'}$ (and the prime factorisation of $x_n^2-1$, if we do not want to calculate it a second time) to construct the entire solution.

In conclusion, we have proved the correctness of the following algorithm.

\newcounter{procedure}
\begin{algorithm}[H]\label{algorithm}

    \caption{For fixed $b$ gives all $a$ for which $x^2-a\, y^2 = 1$ and $z^2 - b\, x^2 = 1$ has two or more solutions in positive integers.}

    \SetKwInOut{Input}{Input}
    \SetKwInOut{Output}{Output}

    \Input{A (non-square) integer $b$.}
    \Output{A list of integers $(x_1, x_2)$ containing the $x$-values of pairs of solutions to $x^2-a\,y^2 =1$ and $z^2 - b\, x^2 = 1$.}

    \BlankLine
    Get the fundamental unit $\epsilon > 1$ of the number field $\QQ[\sqrt{b}]$;

    $c_1 = 1 + \dfrac{1}{4b} + \dfrac{1}{2b\epsilon}$;
    
    $c_\text{mat} = 45\cdot 4^9 \cdot e^4 (26.25 + \log(4^2 \log(4e) ) ) $;

    $c_m = $ Solution of $-c_\text{mat} 16^2 \log\epsilon \log\sqrt{b} \log(6e\log(6e)) (m+0.52) < -2m+0.39$;

    $c_0 = \dfrac{c_m(c_1^2 b + c_1)}{\log\epsilon}$;
    
    $\mu = \dfrac{\log\sqrt{b}}{\log\epsilon}$;

    $[a_0; a_1, \ldots ] = $ Continued fraction expansion of $\mu$;


        
            

    
        
        
        
        
        
        
        
        

    \setcounter{procedure}{\value{AlgoLine}}

\end{algorithm}

\begin{algorithm}[H]

    \setcounter{AlgoLine}{\value{procedure}}

    \For{$i=1, 2, \ldots$}{
        $\dfrac{p_i}{q_i} = [a_0; a_1, \ldots, a_i]$;

        $ A = \max\set{a_1, \ldots, a_i} $;
        
        \uIf{ $q_i \geq c_m$ }{
            $c_{n_1} = \left\lfloor \dfrac{ \log q_i + \log c_0  + \log(2 + A) }{2 \log\epsilon}  \right\rfloor$ ;
            
            \textbf{break};
        }
    }
    
    \For{$n=1, \ldots, c_{n_1}$}{
        $x_n = \dfrac{\epsilon^n - \epsilon^{-n}}{2\sqrt{b}}$;

        \uIf{$x_n \not\in \ZZ$ \textbf{ or } $x_n \leq \sqrt{1+b}$ \textbf{ or } $\sqrt{1+b\,x_n^2} \not\in \ZZ $}{
            \textbf{continue};
        }

        $\gamma_n = x_n + \sqrt{x_n^2 - 1}$;

        Compute prime factor decomposition $x_n^2 - 1 = p_1^{\alpha_1} \cdots p_s^{\alpha_s}$;

        \eIf{ any $\alpha_i > 1$ }{
            $\displaystyle \Pi = \prod_{ \substack{ i = 1 \\ \alpha_i \text{ odd} } }^s p_i $;

            $c_l = \left\lfloor  \dfrac{\log\gamma_n}{\log(2\sqrt{\Pi})} \right\rfloor$;
        }{
            $c_l = 1$;
        }

        $c_{n_2} = \left[  \left\lfloor \dfrac{ c_m \log\gamma_n + l \log(c_1\sqrt{b}) }{\log\epsilon} \right\rfloor \text{ for } l=1, \ldots, c_l \right]$;

        $\mu_n = \frac{\log\epsilon}{\log\gamma_n}$;

        $[a_0; a_1, \ldots] = $ Continued fraction expansion of $\mu_n$;

        \For{$l = 1, \ldots, c_l$}{
            $ \tau =  - \dfrac{l \log\sqrt{b}}{\log\gamma_n}$;

            $c_2 = \dfrac{l(c_1^2 b + c_1)}{\log\gamma_n}$;

            $c_3 = \dfrac{2\log\epsilon}{l}$;

            \For{$i=1, 2, \ldots$}{
                $\dfrac{p_i}{q_i} = [a_0; a_1, \ldots, a_i]$;

                $\kappa = \dfrac{1}{ 2l c_{n_2}(l) \set{q_i \mu_n} }$;

                \uIf{ $\kappa > 1$ \textbf{ and } $\kappa \set{q_i \tau} > 1$ }{
                    $c_{n_2}(l) = \left\lfloor \dfrac{\log(2\kappa q_i c_2)}{lc_3} \right\rfloor $;

                    \textbf{break};
                }
            }

            $c_{n_2} = \max(c_{n_2})$;

            \For{$n' = n+1, \ldots, c_{n_2}$}{
                $x_{n'} = \dfrac{\epsilon^n - \epsilon^{-n}}{2\sqrt{b}}$;
                
                \uIf{$x_{n'} \not\in \ZZ$ \textbf{ or } $x_{n'} \leq \sqrt{1+b}$ \textbf{ or } $\sqrt{1+b\,x_{n'}^2} \not\in \ZZ $}{
                    \textbf{continue};
                }
                \uIf{ $\sqrt{\dfrac{x_{n'}^2-1}{x_n^2-1}} \in \ZZ$ }{
                    $(x_n, x_{n'})$ is a pair of solutions;
                }
            }
        }
    }
    
\end{algorithm}
\newpage

\begin{remark}
    Checking all (non-square) $b$ in the range $[1, 10000]$ using an implementation of Algorithm~\ref{algorithm} in Sage took approximately $101$ hours and $24$ minutes on a standard desktop PC. A single $b$ took about $37$ seconds on average, while the longest run was around $196$ seconds.
\end{remark}

\begin{remark}
    There are a number of points worth noting in an implementation of the above procedure that are not readily apparent from the pseudocode alone.

    First, computing the powers of the fundamental unit $\epsilon = \xi + \eta \sqrt{b}$ to get the candidates for the solutions is best done in the number field $\QQ[\sqrt{b}]$.

    Solving Inequality~\eqref{eq:matveev}, or the inequality
    \[
        \begin{split}
            -c_{\text{Mat}} 16^2 \log\delta_a \log\epsilon \log\sqrt{b} \log\br{ 6e \log(6e) \br{m + \frac{\log(c_1\sqrt{b})}{\log(2\sqrt{b})} } } < \\ -2m \log\delta_a + \log\br{1 + \frac{1}{b}}
        \end{split}
    \]
    to be more precise, numerically may fail or take too long. In this case, or as a general alternative, we can use the fact that $m < c \log m$ implies $m < 2c \log c$ to compute an upper bound $c_m = 2c \log c$ (for the explicit $c$ given by the terms in the inequality).

    When considering the continued fractional expansion of $\mu = \frac{\log\sqrt{b}}{\log\epsilon}$, we must ensure that the convergents up to the one used in the calculation of $c_{n_1}$ are the correct ones. Sagemath can compute infinite continued fraction expansions (up to a certain memory limit). Since the convergents $\frac{p_k}{q_k}$ of any number always suffice $q_k \geq \Phi^k$ for the golden ratio $\Phi$, we know that we have to compute no more than the first $\left\lfloor \frac{\log c_m}{\log\Phi} \right\rfloor$ convergents to obtain $q_k \geq c_m$.
    
    If we instead compute the finite continued fraction expansion of an approximation of $\mu$, we must also take into account the approximation error we have made. This could lead to an infinite loop if the approximation error always dominates the computed lower bound from Lemma~\ref{lem:continuedfraction}.

    It is a priori possible that the condition $\kappa \set{q_i \tau} > 1$ will never be satisfied. The algorithm can be made terminable by stopping the iteration over the convergents of $\frac{\log\epsilon}{\log\gamma_n}$ if the reduced bound would already be greater than the initial bound $\left\lfloor \frac{ c_m \log\gamma_n + l \log(c_1\sqrt{b}) }{\log\epsilon} \right\rfloor$, or if the reduced bound is too large for the (last) following step to be practically feasible. In practice, it had often been the first or second $\kappa > 1$ that also satisfied $\kappa\set{q_i \tau} > 1$.
\end{remark}

\section*{Acknowledgements}
This research was funded in whole or in part by the Austrian Science Fund (FWF) [10.55776/I4406]. For open access purposes, the author has applied a CC BY public copyright license to any author accepted manuscript version arising from this submission.

%
%

\begin{bibdiv}
\begin{biblist}

    \bib{zhang2015}{article}{
        title={Complete solutions of the simultaneous {Pell} equations $x^2 - 24\, y^2 = 1$ and $y^2 - p\, z^2 = 1$},
        author={X. {Ai} and J. {Chen} and S. {Zhang} and H. {Hu}},
        journal={J Number Theory},
        volume={147},
        year={2015},
        pages={103--108}
    }

    \bib{baker1984}{book}{
        title={A {C}oncise {I}ntroduction to the {T}heory of {N}umbers},
        author={Baker, A.},
        publisher={Camebridge University Press},
        year={1984}
    }

    \bib{bakdav1969}{article}{
        title={The equations $3x^2-2 = y^2$ and $8x^2 - 7 = z^2$.},
        author={A. Baker and H. Davenport},
        journal={Q. J. Math. Oxf. Ser.},
        volume={2},
        number={20},
        year={1969},
        pages={129--137}
    }

    \bib{bennett1998}{article}{
        title={On the number of solutions of simultaneous Pell equations: In memory of professor Larry Roberts},
        author={Bennett, M.A.},
        journal={J. Reine Angew. Math.},
        volume={498},
        year={1998},
        pages={173–-199}
    }

    \bib{cipu2007}{article}{
        title={Pairs of Pell equations having at most one common solution in positive integers},
        author={Cipu, M.},
        journal={An. St. Univ. Ovidius Constanta},
        volume={15},
        number={1},
        year={2007},
        pages={55--66}
    }

    \bib{cipumign2007}{article}{
        title={On the number of solutions to systems of Pell equations},
        author={M. Cipu and M. Mignotte},
        journal={J. Reine Angew. Math.},
        volume={125},
        number={2},
        date={2007},
        pages={356–-392}
    }
    
    \bib{laurent2008}{article}{,
        title = {Linear forms in two logarithms and interpolation determinants. {II}.},
        author = {Laurent, M.},
        journal = {Acta Arith.},
        volume = {133},
        number = {4},        
        year = {2008},
        pages = {325--348}
    }

    \bib{massrick1996}{article}{
        title={Simultaneous Pell Equations},
        author={D.W. Masser and J.H. Rickert},
        journal={J Number Theory},
        volume={61},
        number={1},
        year={1996},
        pages={52--66}
    }

    \bib{matveev2000}{article}{
        title={An explicit lower bound for a homogeneous rational linear form in logarithms of algebraic numbers. {II}.},
        author={Matveev, E.M.},
        journal={Izv. Ross. Akad. Nauk Ser. Mat.},
        volume={64},
        number={6},
        year={2000},
        pages={125--180}
    }

    \bib{odjzieg2023}{article}{
        title={On prime powers in linear recurrence sequences},
        author={J. Odjoumani and V. Ziegler},
        journal={Ann Math Quebec.},
        volume={47},
        number={2},
        year={2023},
        pages={349--366}
    }

    \bib{siegel1929}{article}{
        title={\"Uber einige Anwendungen diophantischer Approximationen},
        author={Siegel, C.L.},
        journal={Abh. Preuss. Akad. Wiss.},
        volume={1},
        year={1929},
        pages={1--70}
    }

    \bib{thue1909}{article}{
        title={\"Uber Ann\"aherungswerte algebraischer Zahlen},
        author={Thue, A.},
        journal={J. Reine Angew. Math.},
        year={1909},
        pages={284--305}
    }

    \bib{yuan2004specific}{article}{
        title={On the Number of Solutions of $x^2 - 4m(m + 1)y^2 = y^2 - bz^2 = 1$},
        author={Yuan, P.},
        journal={Proc. Amer. Math. Soc.},
        volume={113},
        number={6},
        year={2004},
        pages={1561--1566}
    }

    \bib{yuan2004}{article}{
        title={Simultaneous Pell Equations},
        author={Yuan, P.},
        journal={Acta Arithm.},
        volume={115},
        year={2004},
        pages={119--131}
    }

    \bib{sagemath}{manual}{
          author={Developers, The~Sage},
           title={{S}agemath, the {S}age {M}athematics {S}oftware {S}ystem({V}ersion 9.3)},
            date={2021},
            note={{\tt https://www.sagemath.org}},
    }

\end{biblist}
\end{bibdiv}

\end{document}